\documentclass[smallextended,envcountsect]{svjour3} 
\usepackage{latexsym, amsmath, amsfonts, amscd, amssymb, verbatim, enumerate}
\smartqed 
\usepackage{graphicx}
\usepackage{color,enumerate}






\newtheorem{Definition}{Definition}[section]
\setlength{\textheight}{9in}
\def\R{{\rm I\!R}}
\def\N{{\rm I\!N}}

\begin{document}

\title{Solution Existence Theorems for Finite Horizon Optimal Economic Growth Problems}


\author{Vu Thi Huong}

\institute{Vu Thi Huong \at Institute of Mathematics, Vietnam Academy of Science and Technology,\\ Hanoi, Vietnam\\
vthuong@math.ac.vn, huong263@gmail.com}

\date{{\text{Communicated by ...}}
		\\
		\\
Received: date / Accepted: date}

\maketitle


\smallskip
\begin{abstract}
The solution existence of finite horizon optimal economic growth problems is studied by invoking Filippov's Existence Theorem for optimal control problems with state constraints of the Bolza type from the monograph of L.~Cesari [\textit{Optimization Theory and Applications}, Springer-Verlag, New York, 1983]. Our results are obtained not only for general problems but also for typical ones with the production function and the utility function being either the AK function or the Cobb--Douglas one. Some open questions and conjectures about the regularity of the global solutions of finite horizon optimal economic growth problems are formulated in this paper.
\end{abstract}

\keywords{Optimal economic growth \and optimal control \and solution existence}

\subclass{91B62\and 49J15\and 37N40\and 46N10\and 91B55}


\setcounter{equation}{0}
\section{Introduction}
Models of economic growth have played an essential role in economic and mathematical studies since the 30s of the twentieth century. Based on different consumption behavior hypotheses, they allow ones to analyze, plan, and predict relations between global factors, which include \textit{capital}, \textit{labor force}, \textit{production technology}, and \textit{national product}, of a particular economy in a given planning interval of time. Principal models and their basic properties have been investigated by Ramsey \cite{Ramsey_1928}, Harrod \cite{Harrod_1939}, Domar \cite{Domar_1946}, Solow \cite{Solow_1956}, Swan \cite{Swan_1956}, and others. Details about the development of the economic growth theory can be found in the books by Barro and Sala-i-Martin~\cite{Barro_Sala-i-Martin_2004} and Acemoglu~\cite{Acemoglu_2009}. 

 Along with the analysis of the global economic factors, another major issue regarding  an economy is the so-called \textit{optimal economic growth problem}, which can be roughly stated as follows: Define the amount of consumption (and therefore, saving) at each time moment \textit{to maximize a certain target of consumption satisfaction} while fulfilling given relations in the growth model of that economy. This optimal consumption/saving problem was first formulated and solved to a certain extent by Ramsey~\cite{Ramsey_1928}. Later, significant extensions of the model in~\cite{Ramsey_1928} were suggested by ~Cass \cite{Cass_1965} and Koopmans~\cite{Koopmans_1965}.
 
Characterizations of the solutions of optimal economic growth problems (necessary optimality conditions, sufficient optimality conditions, etc.) have been discussed in the books \cite[Chapter~5]{Takayama_1974}, \cite[Chapters~5, 7, 10, and 11]{PierreNVTu_1984}, \cite[Chapter~20]{Chiang_Wainwright_2005}, \cite[Chapters~7 and 8]{Acemoglu_2009}, and some papers cited therein. However, results on the solution existence of these problems seem to be quite rare. For infinite horizon models, some solution existence results were given in \cite[Example~7.4]{Acemoglu_2009} and \cite[Subsection~4.1]{d'Albis_Gourdel_Cuong_2008}. For finite horizon models, our careful searching in the literature leads just to \cite[Theorem~1]{Nikolskii_2016}. This observation  motivates the present investigations.

 This paper considers the solution existence of finite horizon optimal economic growth problems of an aggregative economy; see, e.g., \cite[Sections~C and D in Chapter~5]{Takayama_1974}. It is worthy to stress that we do not assume any special saving behavior, such as \textit{the constancy of the saving rate} as in growth models of Solow \cite{Solow_1956} and Swan \cite{Swan_1956} or the \textit{classical saving behavior} as in \cite[p.~439]{Takayama_1974}. Our main tool is Filippov's Existence Theorem for optimal control problems with state constraints of the Bolza type from the monograph of Cesari~\cite{Cesari_1983}. Our new results on the solution existence are obtained under some mild conditions on the \textit{utility function} and the \textit{per capita production function}, which are two major inputs of the model in question. The results for general problems are also specified for typical ones with the production function and the utility function being either the \textit{AK function} or the \textit{Cobb--Douglas one} (see, e.g., \cite{Barro_Sala-i-Martin_2004} and \cite{Takayama_1974}). Some interesting open questions and conjectures about the \textit{regularity of the global solutions} of finite horizon optimal economic growth problems are formulated in the final part of the paper. Note that, since the saving policy on a compact segment of time would be implementable if it has an infinite number of discontinuities, our concept of regularity of the solutions of the optimal economic growth problem has a clear practical meaning. 
 
The solution existence theorems herein for finite horizon optimal economic growth problems cannot be derived from the above cited results in \cite[Theorem~1]{Nikolskii_2016}, because the assumptions of the latter are more stringent and more complicated than ours. For solution existence theorems in optimal control theory, apart from \cite{Cesari_1983}, the reader is referred to \cite{Lee_Markus_1986}, \cite{Balder_1983}, and the references therein.
 
 The rest of this paper is organized as follows. Section~\ref{Background Materials} presents the modeling of the finite horizon optimal economic growth problems and some background materials including the above-mentioned Filippov's theorem.  Results on the solution existence for general and typical problems are addressed, respectively, in Section~\ref{general-OEG-problems}~and ~\ref{typical-OEG-problems}. Further discussions about the assumptions on the per capita production function and about the regularity of the global solutions are given in Section~\ref{Further discussions}.
 
\section{Preliminaries}\label{Background Materials}
This section collects some notations, definitions, and results that will be used in the sequel. The emphasis will be made on optimal economic growth problems.

 By $\R$ (resp., $\R_+$, and $\N)$ we denote the set of real numbers (resp., the set of nonnegative real numbers, and the set of positive integers). The Euclidean norm in the $n$-dimensional space $\R^n$ is denoted by $\|.\|$. The \textit{Sobolev space}  $W^{1,1}([t_0, T], \R^n)$ (see, e.g., \cite[p.~21]{Ioffe_Tihomirov_1979}) is the linear space of the \textit{absolutely continuous functions} $x~:~[t_0, T] \to \R^n$ equipped with the norm $$\|x\|_{W^{1,1}}=\|x(t_0)\|+\int_{t_0}^T \|\dot x(t)\| dt.$$  It is well-known (see, e.g., \cite{Kolmogorov_Fomin_1970}) that any absolutely continuous function  $x:[t_0, T] \to \R^n$ is Fr\'echet differentiable everywhere on $[t_0, T]$. Moreover, the function $\dot x(\cdot)$ is integrable on $[t_0, T]$ with the integral being understood in the Lebesgue sense. The space $W^{1,1}([t_0, T], \R)$ is vital for us, because the capital-to-labor ratio function $k(\cdot)$ (see Subsection~\ref{OEG-models} below) in the economic growth models is sought in that space.

\subsection{Optimal Economic Growth Models}\label{OEG-models}
Following Takayama \cite[Sections~C and D in Chapter~5]{Takayama_1974}, we consider the problem of \textit{optimal growth of an aggregative economy}.  Suppose that the economy can be characterized by one sector, which produces the \textit{national product} $Y(t)$ at time $t$. Suppose that $Y(t)$ depends on two factors, the \textit{labor} $L(t)$ and the \textit{capital} $K(t)$, and the dependence is described by a \textit{production function} $F$. Namely, one has
\begin{equation*}\label{produ-func}
Y(t)=F(K(t), L(t)),\quad  t\geq 0.
\end{equation*}
It is assumed that $F:\R^2_+\to \R_+$ is a function defined on the nonnegative orthant $\R^2_+$ of $\R^2$ having nonnegative real values, and that it exhibits constant returns to scale, i.e., 
\begin{equation}\label{produ-assum-1}
F(\alpha K, \alpha L)=\alpha F(K, L), \quad \forall \alpha> 0, \, \forall (K, L)\in\R^2_+.
\end{equation}

For every $t\geq 0$, by $C(t)$ and $I(t)$, respectively, we denote the \textit{consumption amount} and the \textit{investment amount} of the economy. The \textit{equilibrium relation} in the output market is depicted by 
 \begin{equation}\label{equil-assum}
 Y(t)=C(t)+I(t), \quad \forall t\geq 0.
 \end{equation} 
The relationship between the capital $K(t)$ and the investment amount $I(t)$ is given by the differential equation
 \begin{equation}\label{invest-assum}
 \dot K(t)=I(t), \quad \forall t\geq 0,
 \end{equation}  where $\dot K(t)=\dfrac{dK(t)}{dt}$ denotes the Fr\'echet derivative of $K(\cdot)$ at time instance $t$ (see, e.g., \cite[pp.~465--466]{Chiang_Wainwright_2005}). If the investment function $I(\cdot)$ is continuous, then one can compute the capital stock $K(t)$ at time $t$ by the formula
 $$K(t)=K(0)+\displaystyle\int_0^t I(\tau)d\tau,$$ where the integral is Riemannian and  $K(0)$ signifies the initial capital stock. In particular, the rate of increase of the capital stock $\dot K(t)$ at every time moment $t$ exists and it is finite. 
 
 If the initial labor amount is $L_0>0$ and the \textit{rate of labor force} is a constant $\sigma>0$ (i.e., $\dot L(t)=\sigma L(t)$ for all $t\geq 0$), then the labor amount at any time $t$ is
\begin{equation}\label{labor-assum}
L(t)=L_0e^{\sigma t}, \quad \forall t\geq 0.
\end{equation}

As $L(t)>0$ for any $t\geq 0$, it follows from \eqref{produ-assum-1} that
$
\frac{Y(t)}{L(t)}=F\Big(\frac{K(t)}{L(t)}, 1\Big)
$ for all $t\geq 0.$
By introducing the \textit{capital-to-labor ratio} $k(t):=\frac{K(t)}{L(t)}$ for $t\geq 0$ and the function $\phi(k):=F(k, 1)$ for $k\geq 0$, from the last equality we have 
\begin{equation}\label{produ-per-cap}
\phi(k(t))=\dfrac{Y(t)}{L(t)}, \quad \forall t\geq 0.
\end{equation}
Due to \eqref{produ-per-cap}, one calls $\phi(k(t))$ the \textit{output per capita} at time $t$ and  $\phi(\cdot)$ the \textit{per capita production function}. Since $F$ has nonnegative values, so does $\phi$. Combining the continuous differentiability of $K(\cdot)$ and $L(\cdot)$, which is guaranteed by \eqref{invest-assum} and \eqref{labor-assum}, with the equality defining the capital-to-labor ratio, one can asserts that $k(\cdot)$ is continuously differentiable. Thus, from the relation $K(t)=k(t)L(t)$ one obtains
$
\dot K(t)=\dot k(t)L(t)+k(t)\dot L(t)
$ for all $t\geq 0.$
Dividing both sides of the last equality by $L(t)$ and recalling that $\dot L(t)=\sigma L(t)$, we get
\begin{equation}\label{dot K/L-1}
\dfrac{\dot K(t)}{L(t)}=\dot k(t)+\sigma k(t), \quad \forall t\geq 0.
\end{equation}
Similarly, dividing both sides of the equality in \eqref{invest-assum} by $L(t)$ and using \eqref{equil-assum}, we have
$
\frac{\dot K(t)}{L(t)}=\frac{Y(t)}{L(t)}-\frac{C(t)}{L(t)}
$ for all $t\geq 0.$
So, by considering the \textit{per capita consumption} $c(t):=\frac{C(t)}{L(t)}$ of the economy at time $t$ and invoking \eqref{produ-per-cap}, one obtains
$
\frac{\dot K(t)}{L(t)}=\phi(k(t))-c(t)
$ for all $t\geq 0.$
Combining this with \eqref{dot K/L-1} yields
\begin{equation}\label{state-k-control-c}
\dot k(t)=\phi(k(t))-\sigma k(t)-c(t), \quad \forall t\geq 0.
\end{equation} 
The amount of consumption at time $t$ is
\begin{equation}\label{consu-assum}
C(t)=(1-s(t))Y(t),\quad \forall t\geq 0,
\end{equation}
with $s(t) \in [0, 1]$ being the \textit{propensity to save} at time $t$ (thus, $1 - s(t)$ is the \textit{propensity to consume} at time $t$). Then, by dividing both sides of \eqref{consu-assum} by $L(t)$ and referring to \eqref{produ-per-cap}, one gets
\begin{equation}\label{consu-savi-relation}
c(t)=(1-s(t))\phi(k(t)),\quad \forall t\geq 0.
\end{equation}
Thanks to \eqref{consu-savi-relation}, one can rewrite \eqref{state-k-control-c} equivalently as
\begin{equation}\label{state-k-control-s}
\dot k(t)=s(t)\phi( k(t))-\sigma k(t), \quad \forall t\geq 0.
\end{equation}
In the special case where $s(\cdot)$ is a constant function, i.e., $s(t)=s>0$ for all $t\geq 0$, the relation~\eqref{state-k-control-s} is the fundamental equation of the \textit{neo-classical aggregate growth model} of Solow \cite{Solow_1956}.

One major concern of the planners is to choose a pair of functions $(k, c)$ (or $(k, s)$) defined on a planning interval $[t_0, T]\subset [0, +\infty]$, that satisfies \eqref{state-k-control-c} (or \eqref{state-k-control-s}) and the initial condition $k(t_0)=k_0$, to maximize a certain target of consumption. Here $k_0 > 0$ is a given value. As the  target function one may  choose is $\int_{t_0}^T c(t)dt$, which is the total amount of per capita consumption on the time period $[t_0, T]$. A more general kind of the target function is $\int_{t_0}^T \omega (c(t))e^{-\lambda t}dt$, where $\omega :\R_+ \to \R$ is a \textit{utility function} associated with the representative individual consumption $c(t)$ in the society, $e^{-\lambda t}$ is the \textit{time discount factor}, and  $\lambda \geq 0$ is the \textit{real interest rate}. Clearly, the former target function is a particular case of the latter one with $\omega(c)=c$ being a linear utility function and the real interest rate $\lambda=0$. For more discussions about the length of the planning interval, the choice the utility function $\omega(\cdot)$ (\textit{it must be linear, or it can be nonlinear?}), as well the choice of the real interest rate (\textit{one must have $\lambda=0$, or one can have $\lambda > 0$?}), we refer the reader to \cite[pp.~445--447]{Takayama_1974}. 

The just mentioned planning task is an \textit{optimal control problem}. Interpreting $k(t)$ as the state trajectory and $s(t)$ as the control function, we can formulate the problem as follows. 

 Let there be given a production function $F:\R^2_+\to \R_+$ satisfying \eqref{produ-assum-1} for any $(K, L)$ from $\R^2_+$ and $\alpha> 0$. Define the function $\phi(k)$ on $\R_+$ by setting $\phi(k)=F(k, 1)$. Assume that a finite time interval $[t_0, T]$ with  $T>t_0\geq 0$, a utility function $\omega: \R_+\to \R$, and a real interest rate $\lambda\geq 0$ are given. Since $c(t)=(1-s(t))\phi(k(t))$ by \eqref{consu-savi-relation}, the target function can be expressed via $k(t)$ and $s(t)$ as $\int_{t_0}^T \omega (c(t))e^{-\lambda t}dt=\int_{t_0}^{T} \omega[(1-s(t))\phi (k(t))]e^{-\lambda t} dt.$  So, the problem of finding an optimal growth process for an aggregative economy is the following one:
\begin{equation} \label{cost functional_GP}
\mbox{Maximize}\ \; I(k,s):=\int_{t_0}^{T} \omega[(1-s(t))\phi (k(t))]e^{-\lambda t} dt
\end{equation}
over $k \in W^{1,1}([t_0, T], \R)$  and measurable functions $s:[t_0, T] \to \R$ satisfying
\begin{equation}\label{state control system_GP}
\begin{cases}
\dot k(t)=s(t)\phi (k(t))-\sigma k(t),\quad &\mbox{a.e.\ } t\in [t_0, T]\\
k(t_0)=k_0\\
s(t)\in [0,1], &\mbox{a.e.\ } t\in [t_0, T]\\
k(t) \geq 0, & \forall t\in [t_0, T].
\end{cases}
\end{equation}
This problem has five parameters: $T > t_0 \geq 0,\, \lambda \geq 0,\, \sigma>0$, and  $k_0 > 0$.

The optimal control problem in \eqref{cost functional_GP}--\eqref{state control system_GP} will be denoted by $(GP)$. This is a finite horizon optimal control problem of the Lagrange type with state constraints.

To make $(GP)$ competent with the given modeling presentation, one has to explain why the state trajectory can be sought in $W^{1,1}([t_0, T], \R)$ and the control function is just required to be measurable. If one assumes that the investment function $I(\cdot)$ is continuous on $[t_0, T]$, then~\eqref{invest-assum} implies that $K(\cdot)$ is continuously differentiable; hence so is $k(\cdot)$. However, in practice, the investment function $I(\cdot)$ can be discontinuous at some points $t\in [t_0, T]$ (say, the policy has a great change, and the government decides to allocate a large amount of money into the production field, or to cancel a large amount of money from it). Thus, the requirement that $k(\cdot)$ is differentiable at these points may not be fulfilled. To deal with this situation, it is reasonable to assume that the state trajectory $k(\cdot)$ belongs to the space of continuous, piecewise continuously differentiable functions on $[t_0, T]$, which is endowed with the norm $\|k\|=\displaystyle\max_{t\in [t_0,T]}|k(t)|$.  Since the latter space is incomplete one embeds it into the space $W^{1,1}([t_0, T], \R)$, which possesses many good properties (see \cite{Kolmogorov_Fomin_1970}). In that way, tools from the Lebesgue integration theory and results from the conventional optimal control theory can be used for $(GP)$. Now, concerning the control function $s(\cdot)$, one has the following observation. Since the derivative $\dot k(t)$ exists almost everywhere on $[t_0,T]$ and  $\dot k(\cdot)$ is a measurable function, for the fulfillment of the relation $\dot k(t)=s(t)\phi (k(t))-\sigma k(t)$ almost everywhere on $[t_0, T]$, it suffices to assume that $s(\cdot)$ is a measurable function. Recall that a function $\varphi:[t_0, T]\to\R$ is said to be \textit{measurable} if for any $\alpha\in\R$ the set $\{t\in [t_0, T]\,:\,\varphi\in (-\infty,\alpha)\}$ is Lebesgue measurable. 

\subsection{Filippov's Existence Theorem for State Constrained Bolza Problems}
To recall a solution existence theorem for finite horizon optimal control problems with state constraints of the Bolza type, we will use the notations and concepts given in the monograph of Cesari \cite[Sections~9.2, 9.3, and 9.5]{Cesari_1983}. Let $A \subset\R\times \R^n$ and $U: A\rightrightarrows \R^m$ be a set-valued map defined on $A$. Let $$M:=\{(t, x, u)\in \R\times\R^n\times \R^m \;:\; (t, x)\in A,\ u \in U(t, x)\},$$ $f_0(t, x, u)$ and $f(t, x, u)=(f_1, f_2, \dots, f_n)$ be functions defined on $M$. Let $B$ be a given subset of $\R\times \R^n\times\R\times \R^n$ and $g(t_1, x_1, t_2, x_2)$ be a real valued function defined on $B$. Let there be given an interval $[t_0, T]\subset\R$. Consider the problem of minimizing the function
\begin{equation}\label{cost functional_SET}
I(x, u):= g(t_0, x(t_0), T,  x(T))+\int_{t_0}^{T} f_0(t, x(t), u(t))dt
\end{equation} over pairs of functions $(x, u)$ such that $x (\cdot):[t_0, T]~\to~\R^n$ is absolutely continuous, $u(\cdot):[t_0, T]~\to~\R^m$  is measurable, $f_0(., x(\cdot), u(\cdot)): [t_0, T]~\to~\R$ is Lebesgue integrable, and 
\begin{equation}\label{state control system_SET}
	\begin{cases}
		\dot x(t)=f(t, x(t), u(t)),\quad &\mbox{a.e.\ } t\in [t_0, T]\\
		u(t)\in U(t, x(t)), &\mbox{a.e.\ } t\in [t_0, T]\\
		(t, x(t))\in A, & \forall t\in [t_0, T]\\
		(t_0, x(t_0), T, x(T))\in B.\\		
	\end{cases}
\end{equation} Such a pair $(x, u)$ is called a \textit{feasible process}. The problem \eqref{cost functional_SET}--\eqref{state control system_SET}, which is an optimal control of the Bolza type with state constraints, is denoted by $\mathcal{B}$.

If  $(x, u)$ is a feasible process for $\mathcal B$, then $x$ is said to be a \textit{feasible trajectory}, and $u$ a \textit{feasible control}. The set of all the feasible processes for $\mathcal B$ is denoted by $\Omega$.	A feasible process $(\bar x, \bar u)$ is said to be a \textit{global minimizer} for  $\mathcal B$ if one has $I(\bar x, \bar u)\leq I(x, u)$ for any feasible process $(x, u)$.

Let $A_0:=\big\{t\,:\, \exists x\in \mathbb R^n\ {\rm s.t.}\ (t,x)\in A\big\}$. Set $$A(t)=\big\{x\in \R^n \;:\; (t, x)\in A\big\}$$ for each $t\in A_0$ and  $$\widetilde Q(t, x)=\big\{(z^0, z)\in \R^{n+1} : z^0\geq f_0(t, x, u),\ z=f(t, x, u) \ \mbox{for some}\ u \in U(t, x)\big\}$$ for every $(t, x)\in A$.

The forthcoming statement is known as \textit{Filippov's Existence Theorem for Bolza problems}.
\begin{theorem}[{see \cite[Theorem~ 9.3.i, p.~317, and Section~9.5]{Cesari_1983}}]\label{Filippov's_Existence_Bolza}
Suppose that $\Omega$ is nonempty, $B$ is closed, $g$ is lower semicontinuous on $B$, $f_0$ and $f$ is continuous on $M$ and, for almost every $t\in [t_0, T]$, the sets $\widetilde Q(t, x)$, $x\in A(t)$, are convex. Moreover,  assume either that $A$ and $M$ are compact or that $A$ is not compact but closed and contained in a slab $[t_1, t_2]\times \R^n$ with $t_1$ and $t_2$ being finite, and the following conditions are fulfilled:
\begin{enumerate}[\rm (a)]
\item For any $\varepsilon\geq 0$, the set $M_\varepsilon:=\{(t, x, u)\in M \;:\; \|x\| \leq \varepsilon\}$ is compact;
\item There is a compact subset $P$ of $A$ such that every feasible trajectory $x$ of $\mathcal B$ passes through at least one point of $P$;
\item There exists $c\geq 0$ such that 
$x_1 f_1(t, x, u)+\dots+x_n f_n(t, x, u) \leq c (\|x\|^2+1)$ for all $ (t, x, u)\in M.$
\end{enumerate}
Then, $\mathcal B$ has a global minimizer.
\end{theorem}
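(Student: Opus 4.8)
The plan is to apply the direct method of the calculus of variations. Since $\Omega$ is nonempty, the infimum $i:=\inf\{I(x,u):(x,u)\in\Omega\}$ is either finite or $-\infty$; I would first choose a minimizing sequence $(x_\nu,u_\nu)_{\nu\in\N}\subset\Omega$ with $I(x_\nu,u_\nu)\to i$, then extract a limit process and verify that it is feasible and attains the infimum. Three ingredients make this work: (i) a compactness argument producing a candidate minimizer $(\ox,\ou)$; (ii) a lower closure argument, resting on the convexity of $\Tilde Q(t,x)$, showing that $\ox$ together with some admissible control satisfies the state equation; and (iii) lower semicontinuity of the cost.

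For the compactness step I would exploit conditions (a)--(c) in the non-compact case, the compact case being simpler. Condition (c) is a one-sided growth bound $\langle x,f(t,x,u)\rangle\le c(\|x\|^2+1)$; combined with $\frac{d}{dt}\tfrac12\|x_\nu(t)\|^2=\langle x_\nu(t),\dot x_\nu(t)\rangle$ and a Gronwall estimate anchored at the point of $P$ furnished by condition (b), this yields a uniform bound $\|x_\nu(t)\|\le R$ for all $t\in[t_0,T]$ and all $\nu$. With the trajectories confined to a fixed ball, condition (a) (compactness of the slices $M_\varepsilon$) forces the velocities $\dot x_\nu=f(t,x_\nu,u_\nu)$ to be equi-integrable on $[t_0,T]$. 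By the Dunford--Pettis theorem $(\dot x_\nu)$ is relatively weakly compact in $L^1([t_0,T],\R^n)$, so along a subsequence $\dot x_\nu\rightharpoonup v$ weakly in $L^1$; equi-integrability gives equi-absolute-continuity of $(x_\nu)$, whence by Arzel\`a--Ascoli $x_\nu\to\ox$ uniformly with $\dot{\ox}=v$, so $\ox\in W^{1,1}([t_0,T],\R^n)$.

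The decisive and most delicate step is to show that $\ox$ is a feasible trajectory and that the cost does not increase in the limit. Here I would invoke Cesari's lower closure theorem. For a.e.\ $t$ the pairs $(f_0(t,x_\nu(t),u_\nu(t)),\dot x_\nu(t))$ lie in $\Tilde Q(t,x_\nu(t))$; the second components converge weakly in $L^1$ to $\dot{\ox}$, while $\int_{t_0}^T f_0(t,x_\nu,u_\nu)\,dt$ stays bounded along the minimizing sequence. Because each $\Tilde Q(t,x)$ is closed (from continuity of $f_0,f$ and the compactness in (a)) and \emph{convex}, the lower closure theorem yields a measurable control $\ou$ with $\ou(t)\in U(t,\ox(t))$ and $\dot{\ox}(t)=f(t,\ox(t),\ou(t))$ for a.e.\ $t$, satisfying $\int_{t_0}^T f_0(t,\ox,\ou)\,dt\le\liminf_\nu\int_{t_0}^T f_0(t,x_\nu,u_\nu)\,dt$. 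The state constraint $(t,\ox(t))\in A$ passes to the limit since $A$ is closed and the convergence is uniform; the endpoint constraint $(t_0,\ox(t_0),T,\ox(T))\in B$ follows from closedness of $B$; and lower semicontinuity of $g$ gives $g(t_0,\ox(t_0),T,\ox(T))\le\liminf_\nu g(t_0,x_\nu(t_0),T,x_\nu(T))$. Adding the two estimates shows $I(\ox,\ou)\le i$, so $(\ox,\ou)$ is a global minimizer.

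I expect the main obstacle to be this lower closure step, i.e.\ justifying that the limiting velocity--cost pair remains in $\Tilde Q$ and extracting the admissible control $\ou$. This is exactly where convexity of $\Tilde Q(t,x)$ is indispensable: without it, weak limits of $\dot x_\nu$ can escape the attainable set $\{f(t,\ox,u):u\in U(t,\ox)\}$, and the resulting relaxation gap would prevent $\ox$ from being realized by any genuine control. The measurability of the selected $\ou$ and the integrability of $f_0(\cdot,\ox,\ou)$ require some care as well, but these become routine once pointwise membership in $\Tilde Q(t,\ox(t))$ has been secured.
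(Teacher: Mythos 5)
The paper does not actually prove this theorem---it is imported verbatim from Cesari's monograph \cite[Theorem~9.3.i and Section~9.5]{Cesari_1983}---so the only meaningful comparison is with Cesari's own argument, and your sketch follows it faithfully: minimizing sequence, equiboundedness of the trajectories from conditions (b)--(c), equi-absolute continuity with Arzel\`a--Ascoli and weak-$L^1$ compactness of the derivatives, lower closure resting on the convexity of the sets $\widetilde Q(t,x)$, and lower semicontinuity of $g$ together with closedness of $A$ and $B$ to pass the constraints to the limit. The one step to state with care is your Gronwall estimate: the one-sided bound in (c) controls $\|x_\nu(t)\|$ only \emph{forward} in time from the anchor point supplied by $P$, which suffices when the anchor is the fixed initial point $(t_0,x(t_0))$ (exactly the situation in this paper's application, where $P=\{(t_0,k_0)\}$), and Cesari's equiboundedness argument is arranged accordingly.
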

Clearly, condition (b) is satisfied if the initial point $(t_0, x(t_0))$ or the end point $(T, x(T))$ is fixed. As shown in \cite[p.~317]{Cesari_1983}, the following condition implies (c): 
\begin{enumerate}[(${\rm c_0}$)]
\item \textit{There exists $c\geq 0$ such that $
\|f(t, x, u)\| \leq c (\|x\|+1)$ for all $(t, x, u)\in M$.}
\end{enumerate}

In the next two sections, several results on the solution existence of optimal economic growth problems will be derived from Theorem~\ref{Filippov's_Existence_Bolza}. 
	  		  	
\section{General Optimal Economic Growth Problems}\label{general-OEG-problems}

Our first result on the solution existence of the finite horizon optimal economic growth problem~$(GP)$ in~\eqref{cost functional_GP}--\eqref{state control system_GP} is stated as follows.

\begin{theorem}\label{Existence_Thm1} For the problem $(GP)$, suppose that $\omega(\cdot)$ and $\phi(\cdot)$ are continuous on $\R_+$. If, in addition,  $\omega(\cdot)$ is concave on $\R_+$ and the function $\phi(\cdot)$ satisfies the condition
\begin{itemize}
\item[${\rm (c_1)}$] There exists $c\geq 0$ such that $\phi (k) \leq (c-\sigma)k+c$ for all  $k\in \R_+$,
\end{itemize}
then $(GP)$ has a global solution.
\end{theorem}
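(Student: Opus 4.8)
The plan is to recast the maximization problem $(GP)$ as a state constrained Bolza minimization problem $\mathcal B$ of the type in Theorem~\ref{Filippov's_Existence_Bolza}, and then verify that theorem's hypotheses one by one. Concretely, I would set $n=m=1$, take the state to be $x=k$ and the control to be $u=s$, and make the identifications $A=[t_0,T]\times\R_+$, $U(t,x)\equiv[0,1]$, $M=[t_0,T]\times\R_+\times[0,1]$, $f(t,x,u)=u\phi(x)-\sigma x$, and $f_0(t,x,u)=-\omega[(1-u)\phi(x)]e^{-\lambda t}$, where the minus sign turns maximization of $I$ into minimization. Since the initial time and state are fixed and the terminal state is free, I would take $g\equiv0$ and $B=\{(t_0,k_0,T,x_2):x_2\in\R\}$, so that a global minimizer of $\mathcal B$ is exactly a global solution of $(GP)$.

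Next I would dispose of the easy hypotheses. The process with $s\equiv0$, which yields $k(t)=k_0e^{-\sigma(t-t_0)}\ge0$, is feasible, so $\Omega\neq\emptyset$; the set $B$ is closed and $g\equiv0$ is trivially lower semicontinuous; and the continuity of $\omega$ and $\phi$ makes both $f$ and $f_0$ continuous on $M$. Because $A$ is closed and contained in the finite slab $[t_0,T]\times\R$ but is not compact, I would invoke the second alternative of Theorem~\ref{Filippov's_Existence_Bolza}, checking (a)--(c). Condition (a) is immediate since $M_\varepsilon=[t_0,T]\times[0,\varepsilon]\times[0,1]$ is compact; condition (b) holds with $P=\{(t_0,k_0)\}$ because the initial point is fixed, exactly as in the remark following the theorem. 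For (c) I would instead verify the stronger condition $({\rm c_0})$: using $u\in[0,1]$, $\phi\ge0$, and $({\rm c_1})$ one gets $|f(t,x,u)|\le\phi(x)+\sigma x\le[(c-\sigma)x+c]+\sigma x=c(x+1)$, which is precisely $({\rm c_0})$. This is the reason $({\rm c_1})$ is stated in the form $\phi(k)\le(c-\sigma)k+c$.

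The heart of the argument, and the step I expect to be the only genuine obstacle, is the convexity of the sets $\widetilde Q(t,x)$, which is where the concavity of $\omega$ enters. Fixing $(t,x)$ with $\phi(x)>0$, as $u$ runs over $[0,1]$ the value $z=u\phi(x)-\sigma x$ sweeps out the interval $[-\sigma x,\phi(x)-\sigma x]$, and the corresponding consumption level is $(1-u)\phi(x)=\phi(x)-\sigma x-z$, an affine decreasing function of $z$. Hence I would rewrite $\widetilde Q(t,x)=\{(z^0,z):z\in[-\sigma x,\phi(x)-\sigma x],\ z^0\ge\psi(z)\}$ with $\psi(z):=-e^{-\lambda t}\omega(\phi(x)-\sigma x-z)$. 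Since $\omega$ is concave, $-\omega$ is convex; composing with the affine map $z\mapsto\phi(x)-\sigma x-z$ and multiplying by the positive constant $e^{-\lambda t}$ keeps $\psi$ convex, so $\widetilde Q(t,x)$ is the epigraph of a convex function over a compact interval and is therefore convex. The degenerate case $\phi(x)=0$ collapses $\widetilde Q(t,x)$ to a vertical half-line, which is also convex.

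With all hypotheses of Theorem~\ref{Filippov's_Existence_Bolza} verified, $\mathcal B$ admits a global minimizer, which translates back into a global solution of $(GP)$, completing the argument. I would expect the write-up to spend essentially all its effort on the convexity computation and on the clean chain of inequalities deriving $({\rm c_0})$ from $({\rm c_1})$; every remaining hypothesis reduces to a one-line check.
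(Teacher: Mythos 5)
Your proposal is correct and follows essentially the same route as the paper: the identical recasting of $(GP)$ as a Bolza problem with $A=[t_0,T]\times\R_+$, $B=\{t_0\}\times\{k_0\}\times\{T\}\times\R$, $f(t,k,s)=s\phi(k)-\sigma k$, $f_0(t,k,s)=-\omega((1-s)\phi(k))e^{-\lambda t}$, the same feasible process $s\equiv 0$, $k(t)=k_0e^{-\sigma(t-t_0)}$, the same checks of (a), (b), and the derivation of $({\rm c_0})$ from $({\rm c_1})$ via $|s\phi(k)-\sigma k|\le\phi(k)+\sigma k\le c(k+1)$. The only (correct and arguably cleaner) variation is your verification of the convexity of $\widetilde Q(t,k)$: you reparametrize by $z$ and exhibit $\widetilde Q(t,k)$ as the epigraph of the convex function $z\mapsto -e^{-\lambda t}\omega(\phi(k)-\sigma k-z)$ over $[-\sigma k,\phi(k)-\sigma k]$ (handling $\phi(k)=0$ separately), whereas the paper checks convexity directly by taking two points, forming $s_\mu=\mu s_1+(1-\mu)s_2$, and invoking the concavity of $\omega$ pointwise.
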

\begin{proof} To apply Theorem~\ref{Filippov's_Existence_Bolza}, we have to interpret $(GP)$ in the form of $\mathcal B$. For doing so,  
we let the variable $k$ (resp., the variable $s$) play the role of the phase variable $x$ in $\mathcal B$ (resp., the control variable $u$ in $\mathcal B$). Then, $(GP)$ has the form of $\mathcal B$ with $n=m=1$, $A=[t_0, T]\times \R_+$, $U(t, k)=[0, 1]$ for all $(t, k)\in A$, $B=\{t_0\}\times \{k_0\}\times \{T\}\times \R$, $M=[t_0, T]\times \R_+\times [0, 1]$, $g\equiv 0$ on $B$, $f_0(t, k, s)=-\omega ((1-s)\phi(k))e^{-\lambda t}$, and $f(t, k, s)=s\phi(k)-\sigma k$ for all $(t, k, s) \in M$. 

Setting $s(t)=0$ and $k(t)=k_0e^{-\sigma(t-t_0)}$ for all $t\in [t_0, T]$, one can easily verify that the pair $(k(\cdot), s(\cdot))$ is a feasible process for $(GP)$. Thus, the set $\Omega$ of the feasible processes is nonempty. It is clear that $B$ is closed, $g$ is continuous on $B$ and, by the assumed continuity of  $\omega(\cdot)$ and $\phi(\cdot)$, $f_0$ and $f$ are continuous on $M$. Besides, the formula for $A$ implies that  $A_0=[t_0, T]$ and $A(t)=\R_+$ for all $t\in A_0$. In addition, by the formulas for $f_0$, $f$ and $U$, one has for any $(t, k)\in A$ the following:
\begin{align*}
&\widetilde Q(t, k)\\
&=\big\{(z^0, z)\in \R^2 : z^0\geq f_0(t, k, s),\  z=f(t, k, s)\ \, {\rm for\ some}\ s \in U(t, k)\big\}\\
&=\big\{(z^0, z)\in \R^2  : \exists s \in [0, 1]\ \, {\rm s.t.}\ z^0\geq -\omega ((1-s)\phi(k))e^{-\lambda t},\ z=s\phi(k)-\sigma k\big\}.
\end{align*} Let us show that, for any  $t\in [t_0, T]$ and $k\in A(t)=\R_+$, the set $\widetilde Q(t, k)$ is convex. Indeed, given any $(z^0_1, z_1)$, $(z^0_2, z_2) \in \widetilde Q(t, k)$ and $\mu \in [0, 1]$, one can find $s_1, s_2 \in [0,1]$ such that 
\begin{align*}
z^0_1\geq -\omega ((1-s_1)\phi(k))e^{-\lambda t},\ \, z_1=s_1\phi(k)-\sigma k,\\
z^0_2\geq -\omega ((1-s_2)\phi(k))e^{-\lambda t},\ \, z_2=s_2\phi(k)-\sigma k.
\end{align*}
Therefore, it holds that
\begin{equation}\label{ineq_1}
\mu z^0_1+(1-\mu)z^0_2\geq  -\mu\omega ((1-s_1)\phi(k))e^{-\lambda t}-(1-\mu)\omega ((1-s_2)\phi(k))e^{-\lambda t}
\end{equation}
and 
\begin{equation}\label{eq_1}
\mu z_1+(1-\mu)z_2= \mu [s_1\phi(k)-\sigma k]+(1-\mu)[s_2\phi(k)-\sigma k].
\end{equation} Setting $s_\mu=\mu s_1+(1-\mu) s_2$, one has $s_\mu\in [0,1]$ and it follows from \eqref{eq_1} that
\begin{equation}\label{eq_2}
\mu z_1+(1-\mu)z_2=s_\mu\phi(k)-\sigma k.
\end{equation}  Clearly, the concavity of $\omega(\cdot)$ on $\R_+$ yields
\begin{align*}
 &-\mu\omega ((1-s_1)\phi(k))-(1-\mu)\omega ((1-s_2)\phi(k))\\
 &\geq-\omega [\mu(1-s_1)\phi(k)+(1-\mu)(1-s_2)\phi(k)] = -\omega ((1-s_\mu)\phi(k)).
\end{align*}
Hence, by \eqref{ineq_1} we obtain
$
\mu z^0_1+(1-\mu)z^0_2\geq -\omega[{(1-s_\mu)\phi(k)]e^{-\lambda t}},
$
which together with \eqref{eq_2} implies that $\mu(z^0_1, z_1)+(1-\mu)(z^0_2, z_2) \in \widetilde Q(t, k)$.

Now, although  $A=[t_0, T]\times \R_+$ is noncompact,  the fact that $A$ is closed and contained in a slab $[t_1, t_2]\times \R$ with $t_1$ and $t_2$ being finite is clear. It remains to check the conditions (a)--(c) in Theorem~\ref{Filippov's_Existence_Bolza}.

For any $\varepsilon\geq 0$, the set $M_\varepsilon$ is compact because
\begin{align*}
M_\varepsilon&=\{(t, k, s)\in[t_0, T]\times \R_+\times [0,1] \;:\; |k| \leq \varepsilon\}\\
&= [t_0, T]\times [0, \varepsilon]\times [0,1].
\end{align*}
So, condition (a) is satisfied. As $P:=\{(t_0,k_0)\}$ is a compact subset of $A$, and every feasible trajectory of $(GP)$ passes through $(t_0,k_0)$, condition (b) is fulfilled. Applied to the case of $(GP)$, where $f(t, k, s)=s\phi(k)-\sigma k$ and $M=[t_0, T]\times \R_+\times [0, 1]$ as explained above, condition (c) in Theorem~\ref{Filippov's_Existence_Bolza} can be rewritten as
 \begin{enumerate}
 	\item[${\rm (c')}$] \textit{There exists  $c\geq0$ such that $\, sk\phi(k)\leq (c+\sigma)k^2+c\,$ for all $(k, s)$ in $\R_+\times[0,1]$.}
 \end{enumerate}
By the comment given after Theorem~\ref{Filippov's_Existence_Bolza}, condition (c) is valid if  condition (${\rm c_0}$) holds. As $f(t, k, s)=s\phi(k)-\sigma k$ and $M=[t_0, T]\times\R\times[0,1]$, the latter can be stated as 
\begin{enumerate}
\item[${\rm (c_0')}$] \textit{There exists  $c\geq0$ such that $|s\phi (k)-\sigma k| \leq c(k+1)$ for all $(k, s)$ in $\R_+\times[0,1]$.}
\end{enumerate}
To prove ${\rm (c_0')}$, observe that the estimates 
\begin{align}\label{1st-hand}
|s\phi (k)-\sigma k| \leq s\phi (k)+\sigma k \leq \phi (k)+\sigma k
\end{align} hold for any $(k, s)\in \R_+\times[0,1]$. Furthermore, thanks to the assumption ${\rm (c_1)}$, we can find a constant $c\geq 0$ such that $\phi (k) \leq (c-\sigma)k+c$ for all $k\in \R_+.$ Since the last inequality can be rewritten as $\phi (k)+\sigma k\leq c(k+1)$, from~\eqref{1st-hand} we get ${\rm (c_0')}$.

Since our problem $(GP)$ in the interpretation given above satisfies all the assumptions of Theorem~\ref{Filippov's_Existence_Bolza}, we conclude that it has a global solution. 
\end{proof}	 

In Theorem \ref{Existence_Thm1}, it is not required that $\phi(\cdot)$ is concave on $\R_+$. It turns out that if the concavity of $\phi(\cdot)$ is available, then there is no need to check ${\rm (c_1)}$. Since the assumption saying that the per capita production function $\phi(k):=F(k, 1)$ is concave on $\R_+$ is reasonable in practice, next theorem seems to be interesting. 

\begin{theorem}\label{Existence_Thm2} If both functions $\omega(\cdot)$ and $\phi(\cdot)$ are continuous and concave on $\R_+$, then $(GP)$ has a global solution.
\end{theorem}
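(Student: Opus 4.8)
The plan is to deduce Theorem~\ref{Existence_Thm2} from Theorem~\ref{Existence_Thm1} by showing that, once $\phi(\cdot)$ is continuous and concave on $\R_+$, the growth condition ${\rm (c_1)}$ is satisfied automatically. Since $\omega(\cdot)$ is already assumed continuous and concave, all the hypotheses of Theorem~\ref{Existence_Thm1} will then be in force, and the desired conclusion follows at once.

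The heart of the argument is the observation that a finite concave function on the half-line cannot grow faster than linearly. Concretely, I would first produce an affine majorant of the form $\phi(k)\le\alpha k+\beta$ valid for every $k\in\R_+$. The cleanest way to obtain it is via a supporting line of the concave function $\phi(\cdot)$ at the interior point $k=1$: since $1$ lies in the interior of $\R_+$, concavity guarantees the existence of a supergradient $p\in\R$ such that $\phi(k)\le\phi(1)+p(k-1)$ for all $k\ge0$, and one may then take $\alpha:=p$ and $\beta:=\phi(1)-p$. (Alternatively, one can argue from the three-chord inequality: for concave $\phi$ the difference quotient $k\mapsto\big(\phi(k)-\phi(0)\big)/k$ is non-increasing, so the slope on $[0,1]$ dominates every slope on $[1,\infty)$, which yields an affine bound for $k\ge1$; the compact piece $[0,1]$ is then controlled by continuity.)

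Given the affine bound $\phi(k)\le\alpha k+\beta$, verifying ${\rm (c_1)}$ is routine. I would set $c:=\max\{\alpha+\sigma,\,\beta,\,0\}$, so that $c\ge0$, $c-\sigma\ge\alpha$, and $c\ge\beta$. Then for every $k\ge0$ one has $(c-\sigma)k+c\ge\alpha k+\beta\ge\phi(k)$, which is precisely ${\rm (c_1)}$. With ${\rm (c_1)}$ established, Theorem~\ref{Existence_Thm1} applies word for word and delivers a global solution of $(GP)$.

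The only genuinely nontrivial step is the first one: pinning down the at-most-linear growth of a concave function on the half-line. The point to be careful about is that a concave function lies \emph{above} its chords but \emph{below} its supporting lines, so the majorant must be built from a supporting (tangent) line at an interior point rather than from a chord; a chord through the origin would give the inequality in the wrong direction on $[0,1]$. Once this is handled correctly, the choice of $c$ and the appeal to Theorem~\ref{Existence_Thm1} are entirely mechanical.
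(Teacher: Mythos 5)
Your proof is correct and follows essentially the same route as the paper: the paper likewise reduces Theorem~\ref{Existence_Thm2} to Theorem~\ref{Existence_Thm1} by extracting an affine majorant of $\phi$ from a supporting line at an interior point $\bar k>0$ (there via a subgradient $a\in\partial(-\phi)(\bar k)$, i.e.\ your supergradient $p=-a$, citing Rockafellar's Theorem~23.4) and then choosing $c=\max\{0,\,\sigma-a,\,\phi(\bar k)+a\bar k\}$, which is exactly your $c=\max\{0,\,\alpha+\sigma,\,\beta\}$ in different notation. No gaps; nothing further is needed.
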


\begin{proof} Set $\psi=-\phi$ and put $\psi(k)=+\infty$ for every $k\in (-\infty,0)$. Then, the function $\psi:\R\to \R\cup\{+\infty\}$ is a proper convex function and the effective domain ${\rm dom}\,\psi$ of $\psi$ is $\R_+$. Select any $\bar k>0$. Since $\bar k$ belongs to the interior of ${\rm dom}\,\psi$, by \cite[Theorem~23.4]{Rockafellar_1970} we know that the \textit{subdifferential} (see, e.g., \cite[p.~215]{Rockafellar_1970}) $\partial\psi(\bar k)$ of $\psi$ at $\bar k$ is nonempty. Thus, taking an element $a\in\partial\psi(\bar k)$, one has
\begin{equation*}
\psi(k)-\psi(\bar k)\geq a(k-\bar k), \quad \forall k\geq 0,
\end{equation*}
or, equivalently,
\begin{equation}\label{subgrad_ineq}
\phi(k)\leq -ak+a\bar k+\phi(\bar k), \quad \forall k\geq 0.
\end{equation}
For $c:=\max\{0, \sigma-a, \phi(\bar k)+a\bar k\}$, one has $c\geq 0$ and
\begin{equation}\label{linear_ineq}
-ak+a\bar k+\phi(\bar k)\leq (c-\sigma)k+c, \quad \forall k\geq 0.
\end{equation}
Combining \eqref{subgrad_ineq} and \eqref{linear_ineq}, one can assert that condition  ${\rm (c_1)}$ in Theorem~\ref{Existence_Thm1} is fulfilled. Thus, the assumed continuity of  $\omega(\cdot)$ and $\phi(\cdot)$ together with the concavity of  $\omega(\cdot)$ allows us to apply Theorem~\ref{Existence_Thm1} to conclude that $(GP)$ has a global solution.
\end{proof}

The next proposition reveals the nature of condition ${\rm (c_1)}$, which is essential for the validity of Theorem~\ref{Existence_Thm1}.  

\begin{proposition}\label{equiv-conditions}
Condition ${\rm (c_1)}$ and the conditions ${\rm (c')}$ and ${\rm (c_0')}$, which were formulated in the proof of Theorem~\ref{Existence_Thm1}, are equivalent. Moreover, each of these conditions is equivalent to the condition 
\begin{equation}\label{c_2}
\limsup_{k\to +\infty}\dfrac {\phi(k)}{k}<+\infty
\end{equation} on the asymptotic behavior of $\phi$.
\end{proposition}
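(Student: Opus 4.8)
The plan is to prove the four conditions equivalent by running the cycle
\[
{\rm (c_1)} \Rightarrow {\rm (c_0')} \Rightarrow {\rm (c')} \Rightarrow \eqref{c_2} \Rightarrow {\rm (c_1)}.
\]
Two of these arrows are already available for free from the proof of Theorem~\ref{Existence_Thm1}. There it is shown, via the estimate \eqref{1st-hand}, that ${\rm (c_1)}$ implies ${\rm (c_0')}$; and the passage from ${\rm (c_0')}$ to ${\rm (c')}$ is precisely the implication ${\rm (c_0)}\Rightarrow{\rm (c)}$ recalled (after Theorem~\ref{Filippov's_Existence_Bolza}) from \cite[p.~317]{Cesari_1983}, specialized to $f(t,k,s)=s\phi(k)-\sigma k$. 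So only the last two arrows require a fresh argument, and closing the loop back to ${\rm (c_1)}$ then yields the full equivalence.

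For ${\rm (c')}\Rightarrow\eqref{c_2}$ I would simply take $s=1$ in ${\rm (c')}$ to get $k\phi(k)\le (c+\sigma)k^2+c$ for all $k\in\R_+$. Dividing by $k^2$ for $k>0$ gives $\phi(k)/k\le (c+\sigma)+c/k^2$, and letting $k\to+\infty$ produces $\limsup_{k\to+\infty}\phi(k)/k\le c+\sigma<+\infty$, which is \eqref{c_2}. This step is entirely routine.

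The substantive implication is $\eqref{c_2}\Rightarrow{\rm (c_1)}$, and it is the only place where the continuity of $\phi$ on $\R_+$ (a standing hypothesis inherited from Theorem~\ref{Existence_Thm1}) is used. Setting $L:=\limsup_{k\to+\infty}\phi(k)/k<+\infty$, the definition of the limit superior furnishes some $K>0$ with $\phi(k)\le (L+1)k$ for all $k\ge K$, which handles the asymptotic regime. On the compact interval $[0,K]$, continuity of $\phi$ gives a bound $\phi(k)\le M$. I would then reconcile the two regimes by choosing the single constant $c:=\max\{M,\,L+1+\sigma\}$ and checking $\phi(k)\le (c-\sigma)k+c$ separately on each piece: on $[0,K]$ using $c\ge\sigma$ (so $(c-\sigma)k\ge 0$) together with $c\ge M$, and on $[K,+\infty)$ using $c-\sigma\ge L+1$.

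The main obstacle is exactly this merging step. Condition ${\rm (c_1)}$ demands a \emph{uniform} affine majorant of $\phi$ that is valid all the way down to $k=0$ (indeed it already forces $\phi(0)\le c$), whereas \eqref{c_2} only constrains the behaviour of $\phi$ as $k\to+\infty$. Thus some local-boundedness hypothesis is genuinely needed to prevent a blow-up of $\phi$ on a bounded set from destroying ${\rm (c_1)}$, and continuity supplies precisely this; choosing $c$ as the maximum of the compact-set bound $M$ and the asymptotic slope $L+1+\sigma$ is what welds the two regimes into one estimate. Once this arrow is in place, the cycle closes and all four conditions are equivalent.
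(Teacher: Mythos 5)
Your proof is correct and follows essentially the same route as the paper: the same cycle of implications, reusing ${\rm (c_1)}\Rightarrow{\rm (c_0')}\Rightarrow{\rm (c')}$ from the proof of Theorem~\ref{Existence_Thm1}, taking $s=1$ for ${\rm (c')}\Rightarrow\eqref{c_2}$, and using continuity plus compactness for $\eqref{c_2}\Rightarrow{\rm (c_1)}$. Your merging step is in fact slightly cleaner than the paper's: you bound $\phi$ itself on the single compact interval $[0,K]$, whereas the paper splits into three regimes ($[0,\varepsilon)$, $[\varepsilon,\mu]$, $(\mu,+\infty)$) so as to bound $\phi(k)/k$ on a compact set away from $0$ --- an unnecessary complication that your two-regime argument avoids.
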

\begin{proof}
The implications ${\rm (c_1)}\Rightarrow {\rm (c_0')}$ and ${\rm (c_0')}\Rightarrow {\rm (c')}$ were obtained in the proof of Theorem~\ref{Existence_Thm1}. So, the proposition will be proved if we can show that ${\rm (c')}$ implies \eqref{c_2} and \eqref{c_2} implies ${\rm (c_1)}$.

To get the implication $ {\rm (c')}\Rightarrow $ \eqref{c_2}, suppose that ${\rm (c')}$  holds. Then, there exists $c\geq 0$ satisfying $\,sk\phi(k)\leq (c+\sigma)k^2+c\,$ for all $\,(k, s)\in \R_+\times[0,1]$. Thus, choosing $s=1$, one has 
$$\dfrac{\phi(k)}{k}\leq c+\sigma+\dfrac{c}{k^2},\quad \forall k>0.$$ By taking the limsup on both sides of the last inequality when $k\to+\infty$, one gets \eqref{c_2}.

Now, to obtain the implication \eqref{c_2} $\Rightarrow {\rm (c')}$, suppose that \eqref{c_2} holds. Then, there exist $\gamma_1>0$ and $\mu>0$ such that $\frac{\phi(k)}{k}\leq \gamma_1$ for every $k>\mu$. Thanks to the continuity of $\phi$ at $k=0$, one can find $\gamma_2>0$ and $\varepsilon\in (0,\mu)$ such that $\phi(k)\leq \gamma_2$ for all $k\in [0,\varepsilon)$. Moreover, by the continuity of the function $k\mapsto \phi(k)/k$ on the compact interval $[\varepsilon,\mu]$, the number $\gamma_3:=\max\Big\{\frac{\phi(k)}{k}\;:\; k\in [\varepsilon,\mu]\Big\}$ is well defined. Thus, for any $c\geq \max\{\gamma_1+\sigma,\,\gamma_2,\,\gamma_3+\sigma\}$, it holds that
$$\phi (k)\leq\gamma_2\leq c\leq (c-\sigma)k+c\quad \forall k\in [0,\varepsilon),$$
$$\phi (k)\leq \gamma_3k\leq (c-\sigma)k+c\quad \forall k\in [\varepsilon,\mu]$$ and
$$\phi (k)\leq \gamma_1k\leq (c-\sigma)k+c\quad \forall k\in (\mu,+\infty).$$ Therefore, one has $\,\phi (k)\leq (c-\sigma)k+c\,$ for every $\,k\geq 0$, which justifies ${\rm (c_1)}$. \end{proof}

\begin{remark}\label{Rem1} {\rm
		There are many continuous functions $\phi: \R_+\to \R_+$ that are nonconcave on $\R_+$  but satisfy condition ${\rm (c_1)}$ in Theorem~\ref{Existence_Thm1}. Indeed, suppose that the values $\bar k>0$, $\phi_0\geq 0$, and $a>0$ are given arbitrarily. Setting
		\begin{equation*}
		\phi(k)=
		\begin{cases}
		\phi_0, \quad &\mbox{if}\ \, k \in [0,\bar k]\\
		a(k-\bar k)+\phi_0, & \mbox{if}\ \, k \in (\bar k, -\infty),
		\end{cases}
		\end{equation*} 
		one has a function $\phi$, that is continuous and nonconcave on $\R_+$. But, since the coercivity condition \eqref{c_2} is fulfilled, this $\phi$ satisfies ${\rm (c_1)}$. More generally, the continuous function 
		\begin{equation*}
		\phi(k)=
		\begin{cases}
		\phi_1(k), \quad &\mbox{if}\ \, k \in [0,\bar k]\\
		a(k-\bar k)^\alpha+\phi_1(\bar k), & \mbox{if}\ \, k \in (\bar k, -\infty),
		\end{cases}
		\end{equation*} where $\alpha\in (0,1]$ is a constant and  $\phi: [0,\bar k]\to \R_+$ is a continuous function, also satisfies ${\rm (c_1)}$ because  \eqref{c_2} is fulfilled. Clearly, there are many ways to choose $\phi_1(k)$ such that this function $\phi$ in nonconcave on $\R_+$.}
\end{remark}

Economic growth problems with utility functions $\omega(\cdot)$ and production functions $F(\cdot)$ of two typical types will be the subject of our consideration in next section.

\section{Typical Optimal Economic Growth Problems}\label{typical-OEG-problems}
As observed by Takayama \cite[p. 450]{Takayama_1974}, the production function given by \begin{equation}\label{const-cap-out-ratio-F}
F(K, L)=\dfrac{1}{a}K, \quad \forall(K, L)\in \R_+^2,
\end{equation}
where $a>0$ is a constant representing the \textit{capital-to-output ratio}, is of a great importance. This function is in the form of the \textit{$AK$ function} (see, e.g., \cite[Subsection~1.3.2]{Barro_Sala-i-Martin_2004}) with \textit{the diminishing returns to capital being absent}, which is a key property of endogenous growth models. The function in \eqref{const-cap-out-ratio-F} is also referred to in connection with the Harrod-Domar model of which a main assumption is that the labor factor is not explicitly involved in the production function (see, e.g., \cite[Footnote~5, p. 464]{Takayama_1974}). In the notations of Subsection~\ref{OEG-models}, by  \eqref{const-cap-out-ratio-F} one has
\begin{equation*}\label{const-cap-out-ratio-phi}
\phi(k)=\dfrac{1}{a}k, \quad \forall k\geq 0.
\end{equation*}
So, the differential equation in \eqref{state control system_GP} becomes
\begin{equation*}\label{const-cap-out-ratio-dot k}
\dot k(t)=\dfrac{1}{a}s(t)k(t)-\sigma k(t),\quad \mbox{a.e.\ } t\in [t_0, T].
\end{equation*}

Another important type of the production function $F$ is the \textit{Cobb-Douglas function} (see, e.g., \cite[p.~29]{Barro_Sala-i-Martin_2004}), which is given by 
\begin{equation}\label{Cobb-Douglas-F}
F(K, L)=A K^\alpha L^{1-\alpha},\quad \forall(K, L)\in \R_+^2,
\end{equation}
with $A > 0$ and $\alpha \in (0, 1)$ being constants. The exponent $\alpha$ (resp., $1-\alpha$) refers to the \textit{output elasticity of capital} (resp., the \textit{output elasticity of labor}), which represents the share of the contribution of the capital (resp., of the labor) to the total product $F(K, L)$. Meanwhile, $A$ expresses the \textit{total factor productivity} (TFP; see, e.g., https://en.wikipedia.org/wiki/Total$\_$factor$\_$productivity). This measure of economic efficiency is the ratio of output over the weighted average of labor and capital input. TFP represents the increase in total production which is in excess of the increase that results from increase in inputs and depends on some intangible factors such as technological change, education, research and development, etc. As $\alpha \in (0, 1)$ , $F$ \textit{exhibits diminishing returns to capital and labor} (see, e.g., \cite[p.~433]{Takayama_1974}). The latter means that the \textit{marginal products} of both capital and labor are diminishing (see, e.g.,\cite[p.~29]{Acemoglu_2009}). The presence of diminishing returns to capital, which plays a very important role in many results of the basic growth model (see, e.g., \cite[p.~29]{Acemoglu_2009}), distinguishes the production given by \eqref{Cobb-Douglas-F} with the one in \eqref{const-cap-out-ratio-F}. The per capita production function corresponding to \eqref{Cobb-Douglas-F} is
\begin{equation}\label{Cobb-Douglas-phi}
\phi(k)=A k^\alpha, \quad \forall k\geq 0.
\end{equation}
Therefore, \eqref{state control system_GP} collapses to
\begin{equation}\label{Cobb-Douglas-dot k}
\dot k(t)=As(t)k^\alpha(t)-\sigma k(t),\quad  \mbox{a.e.\ } t\in [t_0, T].
\end{equation}

Since \eqref{const-cap-out-ratio-F} can be written in the form of \eqref{Cobb-Douglas-F} with $\alpha:=1$ and $A:=1/a$, one can combine the above two types of production functions in a general one by considering \eqref{Cobb-Douglas-F} with $A>0$ and  $\alpha \in (0, 1]$. This means that one has deal with the model \eqref{Cobb-Douglas-phi}--\eqref{Cobb-Douglas-dot k}, where $A>0$ and $\alpha \in (0, 1]$ are given constants. In the same manner, concerning the utility function $\omega(\cdot)$, the formula 
\begin{equation}\label{Cobb-Douglas-omega}
\omega (c)=c^\beta, \quad \forall c\geq 0
\end{equation}
with $\beta \in (0, 1]$ can be considered. For $\beta =1$, $\omega(\cdot)$ is a linear function. For $\beta \in (0, 1)$, it is a Cobb-Douglas function.

In the rest of this section, for the problem $(GP)$, we assume that $\phi(\cdot)$ and $\omega(\cdot)$ are given respectively by \eqref{Cobb-Douglas-phi} and \eqref{Cobb-Douglas-omega}. Then, the target function of $(GP)$ is 
\begin{equation*}
I(k,s)=\int_{t_0}^{T} [1-s(t)]^\beta\phi^\beta(k(t)) e^{-\lambda t}dt=A^\beta\int_{t_0}^{T} [1-s(t)]^\beta k^{\alpha\beta}(t)  e^{-\lambda t}dt.
\end{equation*}
Thus, we have to solve the following equivalent problem:
		\begin{equation} \label{cost functional_GP_1}
		\mbox{Maximize}\ \; \int_{t_0}^{T} [1-s(t)]^\beta k^{\alpha\beta}(t)  e^{-\lambda t}dt
		\end{equation}
		over $k \in W^{1,1}([t_0, T], \R)$  and measurable functions $s:[t_0, T] \to\R$ satisfying
		\begin{equation} \label{state control system_GP_1}
		\begin{cases}
		\dot k(t)=Ak^\alpha(t)s(t)-\sigma k(t),\quad &\mbox{a.e.\ } t\in [t_0, T]\\
		k(t_0)=k_0\\
		s(t)\in [0,1], &\mbox{a.e.\ } t\in [t_0, T]\\
		k(t)\in [0, +\infty), & \forall\ t\in [t_0, T]
		\end{cases}
		\end{equation}
		with $\alpha\in (0, 1]$, $\beta \in (0, 1]$, $A>0$, $T>t_0\geq 0$, $\lambda\geq 0$,  $\sigma>0$, and  $k_0 \geq 0$ being given parameters.
		
		The forthcoming result is a consequence of Theorem~\ref{Existence_Thm2}.
 
 \begin{theorem}\label{Existence_Thm3}
 For any $\alpha\in (0, 1]$ and $\beta \in (0, 1]$, the optimal economic growth problem in \eqref{cost functional_GP_1}--\eqref{state control system_GP_1} possesses a global solution.
 \end{theorem}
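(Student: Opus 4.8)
The plan is to recognize \eqref{cost functional_GP_1}--\eqref{state control system_GP_1} as a special case of $(GP)$ and then apply Theorem~\ref{Existence_Thm2}. Concretely, I would take the per capita production function to be $\phi(k)=Ak^\alpha$ from \eqref{Cobb-Douglas-phi} and the utility function to be $\omega(c)=c^\beta$ from \eqref{Cobb-Douglas-omega}. With these data the state equation, the initial condition, the control constraint and the state constraint of $(GP)$ are exactly those listed in \eqref{state control system_GP_1}, while the cost functional $I(k,s)$ of $(GP)$ equals $A^\beta\int_{t_0}^T[1-s(t)]^\beta k^{\alpha\beta}(t)e^{-\lambda t}\,dt$. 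Since the factor $A^\beta$ is a fixed positive constant, the functional in \eqref{cost functional_GP_1} and $I(k,s)$ have the same set of global maximizers over the feasible set defined by \eqref{state control system_GP_1}; hence it suffices to produce a global solution of this instance of $(GP)$.

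The heart of the argument is the verification that both $\omega(\cdot)$ and $\phi(\cdot)$ satisfy the hypotheses of Theorem~\ref{Existence_Thm2}, that is, that they are continuous and concave on $\R_+$. Continuity of $c\mapsto c^\beta$ and of $k\mapsto Ak^\alpha$ on $\R_+$ is immediate. For concavity I would argue once and for all that, for any exponent $p\in(0,1]$, the real function $t\mapsto t^p$ is concave on $\R_+$: when $p=1$ it is linear, and when $p\in(0,1)$ its second derivative $p(p-1)t^{p-2}$ is negative on $(0,+\infty)$, so it is strictly concave there, and concavity extends to the boundary point $t=0$ by continuity. Applying this with $p=\beta$ gives the concavity of $\omega$, and applying it with $p=\alpha$ and multiplying by the positive constant $A$ gives the concavity of $\phi$.

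With both functions shown to be continuous and concave on $\R_+$, Theorem~\ref{Existence_Thm2} applies directly and yields a global solution of $(GP)$, hence of \eqref{cost functional_GP_1}--\eqref{state control system_GP_1}. I do not expect a serious obstacle: the only mildly delicate point is the concavity of $t\mapsto t^p$ at the boundary $t=0$, where the derivative is unbounded for $p\in(0,1)$, and this I would settle either by the continuity-extension argument just sketched or, if one wishes to avoid derivatives entirely, directly from the convex-combination definition of concavity. Note also that, in contrast with Theorem~\ref{Existence_Thm1}, condition ${\rm (c_1)}$ need not be checked separately here, precisely because Theorem~\ref{Existence_Thm2} already derives it from the concavity of $\phi(\cdot)$.
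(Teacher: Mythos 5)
Your proposal is correct and follows essentially the same route as the paper: identify $\phi(k)=Ak^\alpha$ and $\omega(c)=c^\beta$ as continuous and concave on $\R_+$ and then invoke Theorem~\ref{Existence_Thm2}. If anything, your case split on the exponent $p\in(0,1]$ is slightly more careful than the paper's proof, which asserts $\phi''(k)=A\alpha(\alpha-1)k^{\alpha-2}<0$ on $(0,+\infty)$ --- a claim that fails (the second derivative vanishes) in the linear case $\alpha=1$, where concavity nevertheless holds trivially, exactly as you note.
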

 \begin{proof} By the assumptions $A>0$,  $\alpha\in (0, 1]$, and $\beta \in (0, 1]$,  the functions $\phi(k)=Ak^\alpha$ and $\omega(c)=c^{\beta}$ are continuous on $\R_+$. The concavity of $\phi(\cdot)$ on $(0,+\infty)$ follows from the fact that $\phi''(k)=A\alpha (\alpha-1)k^{\alpha-2}<0$ for every $k\in (0,+\infty)$ (see, e.g., \cite[Theorem~4.4]{Rockafellar_1970}). As $\phi(\cdot)$ is continuous at $0$, we can assert that $\phi(\cdot)$ is concave on  $\R_+$. The concavity of $\phi(\cdot)$ on  $\R_+$ is verified similarly. Since both functions $\omega(\cdot)$ and $\phi(\cdot)$ are continuous and concave on $\R_+$, Theorem~\ref{Existence_Thm2} assures the solution existence for the problem \eqref{cost functional_GP_1}--\eqref{state control system_GP_1}.
 \end{proof}
 
 Depending on the displacement of $\alpha$ and $\beta$ on $(0, 1]$, we have four types of the model \eqref{cost functional_GP_1}--\eqref{state control system_GP_1}:
 \begin{itemize}
 	\item ``Linear-linear":  $\phi(k)=Ak$ and  $\omega(c)=c$ (both the per capita production function and the utility function are linear); 
 	\item ``Linear-nonlinear":  $\phi(k)=Ak$ and $\omega(c)=c^{\beta}$ with $\beta\in (0,1)$ (the per capita production function is linear, but the utility function is nonlinear);
 	\item ``Nonlinear-linear":  $\phi(k)=Ak^\alpha$ and $\omega(c)=c$ with $\alpha\in (0,1)$ (the per capita production function is nonlinear, but the utility function is linear);
 		\item ``Nonlinear-nonlinear":  $\phi(k)=Ak^\alpha$ and  $\omega(c)=c^{\beta}$ with $\alpha\in (0,1)$ and $\beta\in (0,1)$ (both the per capita production function and the utility function are nonlinear).
 \end{itemize} 
 
 Although the problem in question of each type has a global solution by Theorem~\ref{Existence_Thm3}, the above classification arranges the difficulties of solving \eqref{cost functional_GP_1}--\eqref{state control system_GP_1}, say, by the Maximum Principle given in \cite[Theorem~9.3.1]{Vinter_2000}. Obviously, problems of the first type are the easiest ones, while those of the fourth type are the most difficult ones.  

\section{Further Discussions}\label{Further discussions}

In this section, first we discuss some assumptions used for getting Theorems \ref{Existence_Thm1} and \ref{Existence_Thm2}. Then we will look deeper into these theorems and the typical optimal economic growth problems in Section~\ref{typical-OEG-problems} by raising some open questions and conjectures about the \textit{uniqueness} and the \textit{regularity} of the global solutions of $(GP)$. 

\subsection{The asymptotic behavior of $\phi$ and its concavity} The results in Section~\ref{general-OEG-problems} were obtained under certain assumptions on the per capita production function $\phi$, which is defined via the production function $F(K,L)$ by the formula 
\begin{equation}\label{phi-def}
\phi (k)=F(k, 1)=\dfrac{F(K,L)}{L}
\end{equation} with $k:=\dfrac{K}{L}$ signifying the capital-to-labor ratio. We want to know: \textit{How the assumptions made on $\phi$ can be traced back to $F$?} 

\begin{proposition}\label{phi_F_pr1}
The per capita production function $\phi:\R_+\to\R_+$ satisfies condition $(c_1)$ if and only if the production function $F:\R^2_+\to\R_+$ has the following property:
\begin{itemize}
\item[$(c_1')$] There exists $c\geq 0$ such that $F(K, L) \leq (c-\sigma)K+cL$ for all $K\geq 0$ and $L>0$.
\end{itemize}
\end{proposition}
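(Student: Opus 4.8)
The plan is to route everything through the constant returns to scale property \eqref{produ-assum-1}, which ties $F$ and $\phi$ together by a single homogeneity identity; once that identity is in place, both implications reduce to direct substitutions. First I would record the identity: for any $K\geq 0$ and $L>0$, taking $\alpha=1/L$ in \eqref{produ-assum-1} gives $F(K/L,1)=(1/L)F(K,L)$, and since $\phi(k)=F(k,1)$ by definition, this rearranges to
$$F(K,L)=L\,\phi\!\left(\frac{K}{L}\right).$$
This single relation is the engine of the whole proof, and it is valid precisely for $L>0$, which matches the range in which $(c_1')$ is stated.

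For the implication $(c_1')\Rightarrow(c_1)$, I would simply specialize $(c_1')$ to $L=1$ and $K=k$. This yields $\phi(k)=F(k,1)\leq(c-\sigma)k+c$ for every $k\in\R_+$ with the same constant $c$, which is exactly $(c_1)$.

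For the converse $(c_1)\Rightarrow(c_1')$, I would take the constant $c\geq 0$ furnished by $(c_1)$, fix arbitrary $K\geq 0$ and $L>0$, set $k=K/L$, and combine the homogeneity identity with $(c_1)$:
$$F(K,L)=L\,\phi(k)\leq L\big[(c-\sigma)k+c\big]=(c-\sigma)K+cL.$$
As $K$ and $L$ were arbitrary, this is $(c_1')$ with the very same $c$.

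The argument is entirely elementary, so there is no genuine obstacle to clear; the only points worth stating carefully are that the homogeneity identity is applied solely for $L>0$ (so that no boundary case $L=0$ needs separate handling, consistent with the statement of $(c_1')$) and that the \emph{same} constant $c$ serves in both directions, so the two conditions are equivalent verbatim rather than merely up to a rescaling of the constant.
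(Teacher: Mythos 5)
Your proof is correct and follows essentially the same route as the paper's: both directions rest on the identity $F(K,L)=L\,\phi(K/L)$ (the paper's relation \eqref{phi-def}, which you rederive explicitly from the homogeneity property \eqref{produ-assum-1} with $\alpha=1/L$), with the converse obtained by setting $L=1$, $K=k$, and the same constant $c$ serving in both directions. Your write-up is, if anything, slightly more careful in spelling out where the homogeneity is invoked and why $L>0$ suffices.
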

\begin{proof} Suppose that $(c_1)$ is satisfied, i.e., there exists $c\geq 0$ such that $\phi (k) \leq (c-\sigma)k+c$ for all  $k\in \R_+$. Then, given any $K\geq 0$ and $L>0$, by substituting $k=\dfrac{K}{L}$ into the last inequality and using \eqref{phi-def}, one gets $$\dfrac{F(K, L)}{L} \leq (c-\sigma)\dfrac{K}{L}+c.$$ This justifies  $(c_1')$. Conversely, suppose that $F(K, L) \leq (c-\sigma)K+cL$ holds for all $K\geq 0$ and $L>0$, where $c\geq 0$ is a constant. Then, letting $L=1$ and $K=k$, where $k\geq 0$ is given arbitrarily, one gets the inequality $\phi (k) \leq (c-\sigma)k+c$. Thus,  $(c_1)$ is fulfilled.
\end{proof}

\begin{proposition}\label{phi_F_pr2}
The function $\phi$ satisfies \eqref{c_2} if and only if $F$ fulfills the following inequality:
\begin{equation}\label{c_2'}
\limsup_{\frac{K}{L}\to +\infty}\dfrac {F(K, L)}{K}<+\infty.
\end{equation}
\end{proposition}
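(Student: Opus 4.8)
Proposition \ref{phi_F_pr2} asserts the equivalence of two limsup conditions: the one on $\phi(k)/k$ as $k \to +\infty$ (condition \eqref{c_2}) and the one on $F(K,L)/K$ as $K/L \to +\infty$ (condition \eqref{c_2'}).

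The plan is to exploit the constant returns to scale property \eqref{produ-assum-1}, which forces the quantity $F(K,L)/K$ to depend on the pair $(K,L)$ only through the ratio $k=K/L$. First I would establish the pointwise identity linking the two expressions: for any $K>0$ and $L>0$, setting $k=K/L$, the definition \eqref{phi-def} gives $F(K,L)=L\,\phi(k)$, whence
\[
\dfrac{F(K,L)}{K}=\dfrac{L\,\phi(k)}{K}=\dfrac{\phi(k)}{K/L}=\dfrac{\phi(k)}{k}.
\]
Thus the two ratios appearing in \eqref{c_2} and \eqref{c_2'} are literally equal whenever $k=K/L$.

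The second step is to transfer this identity to the level of the upper limits. Since $g(K,L):=F(K,L)/K$ is in fact a function of the single variable $k=K/L$, namely $g(K,L)=h(k)$ with $h(k):=\phi(k)/k$, the condition $K/L\to+\infty$ is precisely the condition $k\to+\infty$, and along any such approach $g$ takes exactly the values of $h$ for large $k$. Consequently the set of cluster values of $g(K,L)$ as $K/L\to+\infty$ coincides with the set of cluster values of $h(k)$ as $k\to+\infty$, so that
\[
\limsup_{K/L\to+\infty}\dfrac{F(K,L)}{K}=\limsup_{k\to+\infty}\dfrac{\phi(k)}{k}.
\]
In particular, the left-hand side is finite if and only if the right-hand side is, which is exactly the asserted equivalence of \eqref{c_2'} and \eqref{c_2}.

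There is essentially no hard step here: the entire content is the reduction of the two-variable limit to a one-variable one, which is dictated by the homogeneity of $F$. The only point demanding a little care is the precise meaning of $\limsup_{K/L\to+\infty}$ taken over the two-dimensional domain $\{(K,L):K\geq 0,\ L>0\}$; I would make explicit that, because $g$ is constant along each ray through the origin, this directional upper limit is unambiguous and equals the ordinary one-variable $\limsup$ of $h$. No hypotheses on $F$ beyond \eqref{produ-assum-1} and those already standing are needed.
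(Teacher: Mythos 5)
Your proof is correct and follows essentially the same route as the paper: both rest on the pointwise identity $\dfrac{F(K,L)}{K}=\dfrac{\phi(k)}{k}$ for $k=K/L$, obtained from \eqref{phi-def} and constant returns to scale, from which the equivalence of \eqref{c_2} and \eqref{c_2'} is immediate. Your extra remark that $F(K,L)/K$ is constant along rays, making the two-variable $\limsup$ unambiguous, is a welcome clarification of the step the paper dismisses as ``straightforward.''
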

\begin{proof} By \eqref{phi-def}, for any $K>0$ and $L>0$, one has
\begin{equation*}
\dfrac {F(K, L)}{K}=\dfrac {L^{-1}F(K, L)}{L^{-1}K}=\dfrac {\phi(k)}{k}
\end{equation*} with  $k:=\dfrac{K}{L}$. Thus, the equivalence between \eqref{c_2} and \eqref{c_2'} is straightforward.
\end{proof}

Propositions \ref{phi_F_pr1} and \ref{phi_F_pr2} show that the assumption made on $\phi$ in Theorem~\ref{Existence_Thm1} and its equivalent representations given in Proposition~\ref{equiv-conditions} can be checked directly on the original function $F$. 

\begin{proposition}\label{phi_F_pr3} 
		The per capita production function $\phi: \R_+\to \R_+$ is concave on $\R_+$ if and only if the  production function $F:\R^2_+ \to \R_+$ is concave on $\R_+\times (0, +\infty)$.
	\end{proposition}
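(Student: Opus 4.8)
The plan is to exploit the constant-returns-to-scale property \eqref{produ-assum-1}, which yields the representation $F(K, L) = L\,\phi(K/L)$ for all $K \geq 0$ and $L > 0$, together with the defining relation $\phi(k) = F(k, 1)$. This representation is the bridge between the one-variable function $\phi$ and the two-variable function $F$, and I expect the whole argument to hinge on manipulating it.

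First I would dispatch the easy implication, that concavity of $F$ forces concavity of $\phi$. Since $\phi(k) = F(k, 1)$ is precisely the restriction of $F$ to the ray $\{(k, 1) : k \geq 0\}$, which lies inside $\R_+ \times (0, +\infty)$, and since the restriction of a concave function to the intersection of its domain with a line is again concave, $\phi$ inherits concavity on $\R_+$ at once.

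The substantive direction is the converse: assuming $\phi$ is concave on $\R_+$, deduce that $F$ is concave on $\R_+ \times (0, +\infty)$. Here I would fix two points $(K_1, L_1)$ and $(K_2, L_2)$ in $\R_+ \times (0, +\infty)$ and a scalar $\mu \in (0, 1)$ (the cases $\mu \in \{0,1\}$ being trivial), set $L := \mu L_1 + (1-\mu) L_2 > 0$, and write $k_i := K_i / L_i$. Using $F(K, L) = L\,\phi(K/L)$, the value of $F$ at the convex combination of the two points equals $L\,\phi\!\left((\mu K_1 + (1-\mu)K_2)/L\right)$. The decisive algebraic observation is that, with the reweighting $t := \mu L_1 / L \in [0,1]$ (so that $1 - t = (1-\mu)L_2/L$), the argument of $\phi$ collapses to $t k_1 + (1-t) k_2$. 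Applying the concavity of $\phi$ to this convex combination and multiplying through by $L$ then reproduces exactly $\mu F(K_1, L_1) + (1-\mu) F(K_2, L_2)$, which is the sought concavity inequality for $F$.

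The main obstacle is spotting the correct reweighting $t = \mu L_1/L$ that converts the two distinct capital-to-labor ratios $k_1, k_2$ into a genuine convex combination matching the weights of $\phi$; once that substitution is identified, the remainder is a one-line invocation of the concavity of $\phi$ followed by bookkeeping with the homogeneity formula.
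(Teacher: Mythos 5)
Your proposal is correct and follows essentially the same argument as the paper: the easy direction restricts $F$ to the line $L=1$, and the substantive direction uses the homogeneity representation $F(K,L)=L\,\phi(K/L)$ together with exactly the same reweighting $t=\mu L_1/(\mu L_1+(1-\mu)L_2)$ that the paper employs. The only cosmetic difference is that the paper phrases the hard direction as a proof by contradiction, whereas you establish the concavity inequality for $F$ directly; the chain of inequalities is identical.
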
 
	\begin{proof} Firstly, suppose that $F$ is concave on $\R_+\times (0, +\infty)$. Let $k_1, k_2 \in \R_+$ and $\lambda \in [0, 1]$ be given arbitrarily. The concavity of $F$ and \eqref{phi-def} yield
		\begin{equation*}
		F (\lambda (k_1, 1)+(1-\lambda)(k_2, 1))\geq \lambda F(k_1, 1)+(1-\lambda)F(k_2, 1)=\lambda \phi(k_1)+(1-\lambda)\phi(k_2).
		\end{equation*} Since
	$F(\lambda (k_1, 1)+(1-\lambda)(k_2, 1))=F(\lambda k_1+(1-\lambda)k_2, 1)$,
	combining this with \eqref{phi-def}, one obtains $
		\phi (\lambda k_1+(1-\lambda)k_2)\geq \lambda \phi(k_1)+(1-\lambda)\phi(k_2).$ This justifies the concavity of $\phi$.
		
		Now, suppose that $\phi$ is concave on $\R_+$. If $F$ is not concave on $\R_+\times (0, +\infty)$, then there exist $(K_1, L_1), (K_2, L_2)$ in $\R_+\times (0, +\infty)$ and $\lambda \in (0, 1)$ such that 
		\begin{equation*}
		F(\lambda K_1+(1-\lambda)K_2, \lambda L_1+(1-\lambda)L_2) <\lambda F(K_1, L_1)+(1-\lambda)F(K_2, L_2).
		\end{equation*}
		By \eqref{phi-def}, it holds that $F(K,L)=L\phi\big(\dfrac{K}{L}\big)$ for any $(K,L)\in\R_+\times (0, +\infty)$. Therefore, we have
		\begin{equation*}
		[\lambda L_1+(1-\lambda)L_2]\phi\Big (\dfrac{\lambda K_1+(1-\lambda)K_2}{\lambda L_1+(1-\lambda)L_2}\Big)<\lambda L_1\phi\Big (\dfrac{K_1}{L_1}\Big)+(1-\lambda)L_2\phi\Big(\dfrac{K_2}{L_2}\Big).
		\end{equation*}
		Dividing both sides of this inequality by $\lambda L_1+(1-\lambda)L_2$ gives
		\begin{equation}\label{phi-F-ineq1}
		\phi\Big(\dfrac{\lambda K_1+(1-\lambda)K_2}{\lambda L_1+(1-\lambda)L_2}\Big)< \dfrac{\lambda L_1}{\lambda L_1+(1-\lambda)L_2}\phi\Big(\dfrac{K_1}{L_1}\Big)+\dfrac{(1-\lambda)L_2}{\lambda L_1+(1-\lambda)L_2}\phi\Big(\dfrac{K_2}{L_2}\Big).
		\end{equation}
		Setting	$\mu=\dfrac{\lambda L_1}{\lambda L_1+(1-\lambda)L_2}$, one has
		$1-\mu=\dfrac{(1-\lambda)L_2}{\lambda L_1+(1-\lambda)L_2}$, $\mu \in (0,1)$, and $$\mu \dfrac{K_1}{L_1}+(1-\mu)\dfrac{K_2}{L_2}=\dfrac{\lambda K_1+(1-\lambda)K_2}{\lambda L_1+(1-\lambda)L_2}.$$
		Thus, \eqref{phi-F-ineq1} means that 
		\begin{equation*}
		\phi\Big(\mu \dfrac{K_1}{L_1}+(1-\mu)\dfrac{K_2}{L_2}\Big)<\mu \phi\Big(\dfrac{K_1}{L_1}\Big)+(1-\mu)\phi\Big(\dfrac{K_2}{L_2}\Big).
		\end{equation*}
		This contradicts to the assumed concavity of $\phi$ on $\R_+$ and completes the proof.
\end{proof}

We have seen that the assumption on the concavity of $\phi$ used in Theorem~\ref{Existence_Thm2} can be verified directly on $F$.

\subsection{Regularity of the optimal economic growth processes}

Solution regularity is an important concept which helps one to look deeper into the structure of the problem in question. One may have deal with Lipschitz continuity, H\"older continuity, and degree of differentiability of the obtained solutions. We refer to \cite[Chapter~11]{Vinter_2000} for a solution regularity theory in optimal control and to \cite[Theorem~9.2, p.~140]{Kinderlehrer_Stampacchia_1980} for a result on the solution regularity for variational inequalities.

The results of Sections~\ref{general-OEG-problems} and~\ref{typical-OEG-problems} assure that, if some mild assumptions on the per capital function and the utility function are satisfied, then $(GP)$ has a global solution $(\bar k, \bar s)$ with $\bar k(\cdot)$ being absolutely continuous on $[t_0, T]$ and  $\bar s(\cdot)$ being measurable. Since the saving policy $\bar s(\cdot)$ on the time segment $[t_0, T]$ cannot be implemented if it has an infinite number of discontinuities, the following concept of regularity of the solutions of the optimal economic growth problem $(GP)$ appears in a natural way.

\begin{Definition}\label{regularity} {\rm A global solution $(\bar k, \bar s)$ of $(GP)$ is said to be \textit{regular} if the propensity to save function $\bar s(\cdot)$ only has finitely many discontinuities of first type on $[t_0, T]$. This means that there is a positive integer $m$ such that the segment  $[t_0, T]$ can be divided into $m$ subsegments $[\tau_i,\tau_{i+1}]$, $i=0,\dots,m-1$, with $\tau_0=t_0$, $\tau_m=T$, $\tau_i<\tau_{i+1}$ for all $i$,  $\bar s(\cdot)$ is continuous on each open interval $(\tau_i,\tau_{i+1})$, and the one-sided limit $\displaystyle\lim_{t\to {\tau_i}^+}\bar s(t)$ (resp., $\displaystyle\lim_{t\to {\tau_i}^-}\bar s(t)$) exists for each $i\in \{0,1,\dots m-1\}$ (resp., for each $i\in \{1,\dots m\}$).}	
\end{Definition}

In Definition~\ref{regularity}, as $\bar s(t)\in [0,1]$ for every $t\in [t_0, T]$, the one-sided limit $\displaystyle\lim_{t\to {\tau_i}^+}\bar s(t)$ (resp., $\displaystyle\lim_{t\to {\tau_i}^-}\bar s(t)$) must be finite for each $i\in \{0,1,\dots m-1\}$ (resp., for each $i\in \{1,\dots m\}$).

\begin{proposition}\label{regularity-1} 
	Suppose that the function $\phi$ is continuous on $[t_0, T]$. If $(\bar k, \bar s)$ is a regular global solution of $(GP)$, then the capital-to-labor ratio $\bar k(t)$ is a continuous, piecewise continuously differentiable function on the segment  $[t_0, T]$. In particular, the function $\bar k(\cdot)$ is Lipschitz on $[t_0, T]$. 
\end{proposition}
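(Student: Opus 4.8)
The plan is to read off the regularity of $\bar k(\cdot)$ from the state equation $\dot{\bar k}(t)=\bar s(t)\phi(\bar k(t))-\sigma\bar k(t)$ together with the structure of the regular control $\bar s(\cdot)$. Continuity of $\bar k(\cdot)$ comes for free: since $(\bar k,\bar s)$ is a feasible process, $\bar k\in W^{1,1}([t_0,T],\R)$, so $\bar k(\cdot)$ is absolutely continuous and hence continuous on $[t_0,T]$. Moreover, its range $\bar k([t_0,T])$ is then a compact subset of $\R_+$, a fact I will use for the Lipschitz estimate.

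For the piecewise continuous differentiability I would invoke Definition~\ref{regularity} to fix a partition $t_0=\tau_0<\tau_1<\dots<\tau_m=T$ on whose open subintervals $(\tau_i,\tau_{i+1})$ the control $\bar s(\cdot)$ is continuous, with finite one-sided limits at each node $\tau_i$. On each such open subinterval the function $g(t):=\bar s(t)\phi(\bar k(t))-\sigma\bar k(t)$ is continuous, being built from the continuous $\bar s(\cdot)$ and the continuous composition $\phi\circ\bar k$. Because $\bar k(\cdot)$ is absolutely continuous and $\dot{\bar k}=g$ almost everywhere, the identity $\bar k(t)-\bar k(t')=\int_{t'}^{t}g(\tau)\,d\tau$ holds for all $t,t'\in(\tau_i,\tau_{i+1})$; since $g$ is continuous there, the Fundamental Theorem of Calculus upgrades this to genuine differentiability, giving $\bar k\in C^1(\tau_i,\tau_{i+1})$ with $\bar k'(t)=g(t)$. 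The existence of the finite one-sided limits of $\bar s(\cdot)$ at each node, together with the continuity of $\phi\circ\bar k$ up to the endpoints, then shows that the one-sided limits of $\bar k'=g$ exist at every $\tau_i$, so $\bar k'$ extends continuously to each closed piece $[\tau_i,\tau_{i+1}]$. This is precisely the assertion that $\bar k(\cdot)$ is piecewise continuously differentiable on $[t_0,T]$.

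For the Lipschitz claim I would bound $\dot{\bar k}$ uniformly. Put $M:=\max_{t\in[t_0,T]}\bar k(t)$ and $N:=\max_{u\in[0,M]}|\phi(u)|$, both finite by continuity on compact sets. The feasibility constraints $\bar s(t)\in[0,1]$ and $\bar k(t)\ge 0$ give, for almost every $t$,
\[
|\dot{\bar k}(t)|=|\bar s(t)\phi(\bar k(t))-\sigma\bar k(t)|\le \bar s(t)|\phi(\bar k(t))|+\sigma\bar k(t)\le N+\sigma M=:L.
\]
Integrating this almost-everywhere bound along the absolutely continuous $\bar k(\cdot)$ yields $|\bar k(t')-\bar k(t)|\le L\,|t'-t|$ for all $t,t'\in[t_0,T]$, so $\bar k(\cdot)$ is Lipschitz with constant $L$.

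The one delicate step is the passage from the almost-everywhere relation $\dot{\bar k}=g$ to genuine $C^1$ smoothness on each open piece: the point is that absolute continuity lets one represent $\bar k$ as the indefinite integral of the continuous function $g$, after which classical calculus applies node by node. Everything else is a routine interplay of continuity, compactness, and the feasibility bounds; in fact the Lipschitz conclusion uses only the continuity of $\phi$ and feasibility, not the full regularity hypothesis, and could alternatively be recovered from the piecewise $C^1$ structure since there are only finitely many pieces.
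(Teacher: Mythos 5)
Your proposal is correct and takes essentially the same route as the paper's proof: fix the partition from Definition~\ref{regularity}, represent the absolutely continuous $\bar k$ as an integral of $\dot{\bar k}$, replace the integrand a.e.\ by the continuous function $g(t)=\bar s(t)\phi(\bar k(t))-\sigma\bar k(t)$ so that the Fundamental Theorem of Calculus yields genuine $C^1$ smoothness on each open piece, and use the finite one-sided limits of $\bar s$ at the nodes to extend $\dot{\bar k}$ continuously to each closed subsegment. The only divergence is the Lipschitz claim, whose proof the paper omits (appealing to the mean value theorem on the piecewise $C^1$ structure), whereas you give a direct a.e.\ bound $|\dot{\bar k}(t)|\le N+\sigma M$ and integrate --- a complete and, as you observe, slightly more general argument since it needs only feasibility and the continuity of $\phi$, not regularity.
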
 
\begin{proof} Since $(\bar k, \bar s)$ is a regular global solution of $(GP)$, there is a positive integer $m$ such that the segment  $[t_0, T]$ can be divided into $m$ subsegments $[\tau_i,\tau_{i+1}]$, $i=0,\dots,m-1$, and all the requirements stated in Definition~\ref{regularity} are fulfilled. Then, for each $i\in\{0,\dots,m-1\}$, from the first relation in \eqref{state control system_GP} we have
\begin{equation}\label{regularity_1a}\dot {\bar k}(t)=\bar s(t)\phi (\bar k(t))-\sigma \bar k(t),\quad \mbox{a.e.\ } t\in (\tau_i,\tau_{i+1}).\end{equation}  Hence, by the continuity of $\phi$ on $[t_0, T]$ and the continuity of $\bar s(\cdot)$ on $(\tau_i,\tau_{i+1})$, we can assert that the derivative $\dot {\bar k}(t)$ exists for every $t\in (\tau_i,\tau_{i+1})$. Indeed, fixing any point $\bar t\in (\tau_i,\tau_{i+1})$ and using the Lebesgue Theorem \cite[Theorem~6, p.~340]{Kolmogorov_Fomin_1970} for the absolutely continuous function $\bar k(\cdot)$, we have 
\begin{equation}\label{regularity_1b}\bar k(t)=\displaystyle\int_{\bar t}^{t}\dot {\bar k}(\tau)d\tau,\quad \forall  t\in (\tau_i,\tau_{i+1}),\end{equation} where integral on the right-hand-side of the equality is understood in the the Lebesgue sense. Since the Lebesgue integral does not change if one modifies the integrand on a set of zero measure, thanks to~\eqref{regularity_1a} we have
\begin{equation}\label{regularity_1c}\bar k(t)=\displaystyle\int_{\bar t}^{t}[\bar s(\tau)\phi (\bar k(\tau))-\sigma \bar k(\tau)]d\tau.\end{equation} As the integrand of the last integral is a continuous function on $(\tau_i,\tau_{i+1})$, the integration in the Lebesgue sense coincides with that in the Riemanian sense, \eqref{regularity_1c} proves our claim that the derivative $\dot {\bar k}(t)$ exists for every $t\in (\tau_i,\tau_{i+1})$. Moreover, taking derivative of both sides of the equality \eqref{regularity_1b} yields
\begin{equation}\label{regularity_2}
\dot {\bar k}(t)=\bar s(t)\phi (\bar k(t))-\sigma \bar k(t),\quad \forall t\in (\tau_i,\tau_{i+1}).
\end{equation} So, the function $\bar k(\cdot)$ is continuously differentiable of $(\tau_i,\tau_{i+1})$. In addition, the relation~\eqref{regularity_2} and the existence of the finite one-sided limit $\displaystyle\lim_{t\to {\tau_i}^+}\bar s(t)$ (resp., $\displaystyle\lim_{t\to {\tau_i}^-}\bar s(t)$) for each $i$ in $\{0,1,\dots m-1\}$ (resp., for each $i$ in $\{1,\dots m\}$) implies that the one-sided limit $\displaystyle\lim_{t\to {\tau_i}^+}\dot{\bar k}(t)$ (resp., $\displaystyle\lim_{t\to {\tau_i}^-}\dot{\bar k}(t)$) is finite for each $i$ in $\{0,1,\dots m-1\}$ (resp., for each $i$ in $\{1,\dots m\}$. Thus, the restriction of $\bar k(\cdot)$ on each segment $[\tau_i,\tau_{i+1}]$, $i=0,\dots,m-1$, is a continuously differentiable function. We have shown that the capital-to-labor ratio $\bar k(t)$ is a continuous, piecewise continuously differentiable function on the segment  $[t_0, T]$. 

We omit the proof of the Lipschitz property of on $[t_0, T]$ of $\bar k(\cdot)$, which  follows easily from the continuity and piecewise continuously differentiablity of the function  by using the classical mean value theorem.
\end{proof}

We conclude this subsection by two open questions and three independent conjectures, whose solutions or partial solutions will reveal more the beauty of the optimal economic growth model $(GP)$.

\textbf{Open question 1:}\textit{ The assumptions of Theorem~\ref{Existence_Thm1} are not enough to guarantee that $(GP)$ has a regular global solution?}

\textbf{Open question 2:} \textit{The assumptions of Theorem~\ref{Existence_Thm2} are enough to guarantee that every global solution of $(GP)$ is a regular one?}

\textbf{Conjectures:} \textit{The assumptions of Theorem~\ref{Existence_Thm3} guarantee that}
	
	(a) \textit{$(GP)$ has a unique global solution};
	
	(b) \textit{Any global solution of $(GP)$ is a regular one};
	
	(c) \textit{If $(\bar k, \bar s)$ is a regular global solution of $(GP)$, then the optimal propensity to save function $\bar s(\cdot)$ can have at most one discontinuity on the time segment $[t_0, T]$}.
 \begin{acknowledgements}
 This work was supported by National Foundation for Science $\&$ Technology Development (Vietnam) under grant number 101.01-2018.308. The author would like to thank Professor Nguyen Dong Yen for his valuable comments and suggestions on the first version of the present paper.
 \end{acknowledgements}


\begin{thebibliography}{}
		
\bibitem{Acemoglu_2009} Acemoglu, D.: Introduction to Modern Economic Growth. Princeton University Press (2009)

\bibitem{Balder_1983} E.~J.~Balder, E.J: An existence result for optimal economic growth problems. J. Math. Anal. Appl. \textbf{95}, 195--213 (1983)

\bibitem{Barro_Sala-i-Martin_2004} Barro, R.J., Sala-i-Martin, X.: Economic Growth. MIT Press (2004) 

\bibitem{Cass_1965} Cass, D.: Optimum growth in an aggregative model of capital accumulation. Review of Economic Studies \textbf{32}, 233--240 (1965)

\bibitem{Cesari_1983} Cesari, L: Optimization Theory and Applications. Springer-Verlag, New York (1983)

\bibitem{Chiang_Wainwright_2005} Chiang, A.C.,  Wainwright, K.: Fundamental Methods of Mathematical Economics. McGraw-Hill (2005)

\bibitem{d'Albis_Gourdel_Cuong_2008} d'Albis, H., Gourdel, P.,  Le Van, C.: Existence of solutions in continuous-time optimal growth models. Econom. Theory \textbf{37}, 321--333 (2008)

\bibitem{Domar_1946} Domar, E.D.: Capital expansion, rate of growth, and employment. Econometrica \textbf{14}, 137--147 (1946)

\bibitem{Harrod_1939} Harrod, R.F.: An essay in dynamic theory. The Economic Journal \textbf{49}, 14--33 (1939)

\bibitem{Ioffe_Tihomirov_1979} Ioffe, A.D.,  Tihomirov, V.M.: Theory of Extremal Problems. North-Holland Publishing Company, Amsterdam (1979)

\bibitem{Kinderlehrer_Stampacchia_1980} Kinderlehrer, D.,
Stampacchia, G.:  An Introduction to Variational Inequalities and Their Applications. Academic Press, New York (1980)

\bibitem{Kolmogorov_Fomin_1970} Kolmogorov, A.N., Fomin, S.V.: Introductory Real Analysis. Dovers Publications, Inc., New York (1970)

\bibitem{Koopmans_1965} Koopmans, T.C.: On the concept of optimal economic growth. In The Econometric Approach to Development Planning.  North-Holland, Amsterdam, 225--295 (1965)

\bibitem{Lee_Markus_1986} Lee, E.B., Markus, L.: Foundations of Optimal Control Theory. Robert E. Krieger Publishing Co., Inc., Melbourne  (1986)

\bibitem{Nikolskii_2016} Nikolskii, M.S.: Study of an optimal control problem related to the Solow control model. Proc. Steklov Inst. Math. \textbf{292}, S231--S237 (2016)

\bibitem{PierreNVTu_1984} Pierre, N.V.T.:  Introductory Optimization Dynamics. Optimal Control with Economics and Management Science Applications. Springer-Verlag, Berlin (1984)

\bibitem{Ramsey_1928} Ramsey, F.P.: A mathematical theory of saving. 	Econ. J. \textbf{l38}, 543--559 (1928)

\bibitem{Rockafellar_1970} Rockafellar, R.T.: Convex Analysis. Princeton University Press, Princeton, New Jersey (1970)

\bibitem{Solow_1956} Solow, R.M.: A contribution to the theory of economic growth. Quart. J. Econom. \textbf{70}, 65--94  (1956)

\bibitem{Swan_1956}  Swan, T.W.: Economic growth and capital accumulation. Economic Record \textbf{32}, 334--361  (1956)

\bibitem{Takayama_1974}  Takayama, A.: Mathematical Economics. The Dryden Press, Hinsdale (1974)

\bibitem{Vinter_2000} Vinter, R.: Optimal Control. Birkh\"auser, Boston (2000)
\end{thebibliography}
\end{document}